\newcommand\nos{100} 
\renewcommand\phi{\varphi}
\renewcommand\epsilon{\varepsilon}
\renewcommand\theta{\vartheta}
\newcommand\mbb{\mathbb}
\newcommand\mcal{\mathscr}
\newcommand\ol{\overline}
\newcommand\ul{\underline}
\newcommand\wt{\widetilde}
\newcommand\sL{\mcal{L}}
\newcommand\sS{\mcal{S}}
\newcommand\fm{\mathfrak{m}}
\newcommand\N{\mbb{N}}
\newcommand\R{\mbb{R}}
\DeclareMathOperator\QM{\rm QM}
\DeclareMathOperator\interior{\rm int}
\newcommand\into{\rightarrow}
\newcommand\isom{\cong}
\renewcommand\le{\leqslant}
\renewcommand\ge{\geqslant}
\numberwithin{equation}{section}
\theoremstyle{plain}
\newtheorem{Thm}[equation]{Theorem}
\newtheorem{Prop}[equation]{Proposition}
\newtheorem{Cor}[equation]{Corollary}
\newtheorem{Lemma}[equation]{Lemma}
\newtheorem*{Thm*}{Theorem}
\newtheorem*{Satz*}{Satz}
\newtheorem*{Prop*}{Proposition}
\newtheorem*{Cor*}{Corollary}
\newtheorem*{Lemma*}{Lemma}
\newtheorem*{Hilfssatz*}{Lemma}
\newtheorem*{Sublemma*}{Sublemma}
\newtheorem*{Conjecture*}{Conjecture}
\theoremstyle{definition}
\newtheorem{Defs}[equation]{Definitions}
\newtheorem{Example}[equation]{Example}
\newtheorem{Remark}[equation]{Remark}
\newtheorem{Remarks}[equation]{Remarks}
\newtheorem*{Def*}{Definition}
\newtheorem*{Defs*}{Definitions}
\newtheorem*{Example*}{Example}
\newtheorem*{Examples*}{Examples}
\newtheorem*{LemmaDef*}{Lemma and Definition}
\newtheorem*{Notation*}{Notation}
\newtheorem*{Problem*}{Problem}
\newtheorem*{Question*}{Question}
\newtheorem*{Remark*}{Remark}
\newtheorem*{Remarks*}{Remarks}
\newtheorem*{Warning*}{Warning}
\newtheorem*{Text*}{}
\renewcommand\nabla\triangledown
\begin{document}
\title[Exposed faces of semidefinitely representable sets]{Exposed faces of\\semidefinitely representable sets}
\author{Tim Netzer}
\address{Fachbereich Mathematik, Universit{\"a}t Konstanz, 78457 Konstanz, Germany}
\email{daniel.plaumann@uni-konstanz.de}
\author{Daniel Plaumann}
\address{Fachbereich Mathematik, Universit{\"a}t Konstanz, 78457 Konstanz, Germany}
\email{tim.netzer@uni-konstanz.de}
\author{Markus Schweighofer}
\address{Universit\'e de Rennes 1\\Laboratoire de Math\'ematiques,
Campus de Beaulieu, 35042 Rennes cedex, France}
\email{markus.schweighofer@univ-rennes1.fr}
\subjclass[2000]{Primary 13J30, 14P10, 52-99, 90C22; Secondary 11E25, 15A48, 52A27}
\date{15 December 2009}
\keywords{convex set, semialgebraic set, linear matrix inequality, spectrahedron,
semidefinite programming, Lasserre relaxation, sums of squares, quadratic module,
preordering}

\begin{abstract}
A linear matrix inequality (LMI) is a condition stating that a
symmetric matrix whose entries are affine linear combinations of
variables is positive semidefinite. Motivated by the fact that
diagonal LMIs define polyhedra, the solution set of an LMI is
called a spectrahedron. Linear images of spectrahedra are called
semidefinitely representable sets. Part of the interest in
spectrahedra and semidefinitely representable sets arises from the
fact that one can efficiently optimize linear functions on them by
semidefinite programming, like one can do on polyhedra by linear
programming.

It is known that every face of a spectrahedron is exposed. This is
also true in the general context of rigidly convex sets. We study
the same question for semidefinitely representable sets. Lasserre
proposed a moment matrix method to construct semidefinite
representations for certain sets. Our main result is that this
method can only work if all faces of the considered set are
exposed. This necessary condition complements sufficient
conditions recently proved by Lasserre, Helton and Nie.
\end{abstract}

\maketitle

\section*{Introduction}A linear matrix polynomial is a symmetric matrix whose entries are real linear
polynomials in $n$ variables. Such a matrix can be evaluated in
any point of $\R^n$, and the set of points where it is positive
semidefinite is a closed convex subset of $\R^n.$ If the matrix is
diagonal, the resulting set is a polyhedron. Since sets defined by
general linear matrix polynomials inherit certain properties from
polyhedra, they are called \emph{spectrahedra}. Sometimes also the
term \emph{LMI (representable) sets} has been used.

Spectrahedra have long been of interest in applications, see for
example the book of Boyd, El Ghaoui, Feron, and Balakrishnan
\cite{MR1284712}. Most importantly, spectrahedra are the feasible
sets of semidefinite programs, which have been much studied in
recent years, as explained for example in Vandenberghe and Boyd
\cite{MR1379041}. Semidefinite programming is a generalization of
linear programming for which there exist efficient algorithms.

Projections of spectrahedra will be called \emph{semidefinitely
representable sets}. They are still useful for optimization.
Indeed, instead of optimizing a linear function on the projection,
one can optimize the same function on the higher dimensional
spectrahedron itself.

In recent years, the fundamental question to characterize
spectrahedra and their projections geometrically has gained a lot
of attention. Helton and Vinnikov have introduced the notion of
rigid convexity, which is an obvious property of spectrahedra.
They show that in dimension two this property characterizes
spectrahedra, and conjecture that the same is true in arbitrary
dimension \cite{MR2292953}. As for semidefinitely representable
sets, the only known property besides convexity is that they are
semialgebraic, i.e. described by a boolean combination of
polynomial inequalities. Indeed, Helton and Nie conjecture that
every convex semialgebraic set is semidefinitely representable
\cite{HeltonNieNecSuffSDP}. Lasserre proposed a construction to
approximate convex semialgebraic sets by semidefinitely
representable sets \cite{MR2505746}. Under certain
conditions this approximation is exact, i.e.~the original set is
semidefinitely representable itself. Helton and Nie have shown that these
conditions are satisfied for a surprisingly large class of sets, see
\cite{MR2292953} Theorem 5.1. They also prove that 
Lasserre's method can be applied locally for compact sets. This
allows them to show semidefinite representability for an even
larger class of sets.

In this work, we investigate the facial geometry of spectrahedra,
rigidly convex sets and semidefinitely representable sets. It is
known that all faces of a spectrahedron are exposed. We review
this fact in Section \ref{seczwei} and prove the same for
rigidly convex sets, as a consequence of Renegar's result for hyperbolicity cones \cite{MR2198215}. Our main result is Theorem \ref{main} in Section \ref{secdrei}. We prove that Lasserre's construction can
only be exact if all faces of the considered convex set are
exposed. This is a necessary condition which complements the
sufficient conditions from the above mentioned literature. We use real algebra, basic model theory, and convex geometry in our
proof.

\section{Preliminaries}

Let $\R[\ul t]$ denote the polynomial ring in $n$ variables
$\ul t=(t_1,\dots,t_n)$ with coefficients in $\R$. A subset $S$ of
$\R^n$ is called \emph{basic closed} if there exist polynomials
$p_1,\dots,p_m\in\R[\ul t]$ such that
\[
S=\sS(p_1,\dots,p_m)=\bigl\{x\in\R^n\:\bigl|\: p_1(x)\ge 0,\dots,p_m(x)\ge 0\bigr\}.
\]
A \emph{linear matrix polynomial} (of dimension $k$ in the
variables $\ul t$) is a linear polynomial whose coefficients are
real symmetric $k\times k$-matrices, i.e.~an expression $A(\ul
t)=A_0+t_1A_1+\cdots +t_nA_n$ with $A_0,\dots,A_n\in {\rm
Sym_k(\R)}$.  A subset $S$ of $\R^n$ is called a
\emph{spectrahedron}, if it is defined by a linear matrix
inequality, i.e. if there exists a linear matrix polynomial $A(\ul
t)$ such that
\[
S=\sS(A)=\bigl\{x\in\R^n\:\bigl|\: A(x)=A_0+x_1A_1+\cdots+x_nA_n\succeq 0\bigr\},
\]
where $\succeq 0$ denotes positive semidefiniteness. It is obvious
that spectrahedra are closed and convex. They are also basic
closed: A real symmetric matrix is positive semidefinite if and
only if the coefficients of its characteristic polynomial have
alternating signs; write
\[
\det(A(\ul t)-sI_k)=c_0(\ul t)+c_1(\ul t)s+\cdots+c_{k-1}(\ul t)s^{k-1}+(-1)^ks^k
\]
with $p_i\in\R[\ul t]$, then
\[
\sS(A)=\sS(c_0,-c_1,\dots,(-1)^{k-1}c_{k-1}).
\]
A further property of spectrahedra is their rigid convexity: A
polynomial $p\in\R[\ul t]$ is called a \emph{real zero polynomial
 with respect to $e\in\R^n$ (RZ$_e$-polynomial)} if $p(e)>0$ and all
zeros of the univariate polynomial $p(e+sv)\in\R[s]$ are real, for
every $v\in\R^n\setminus \{0\}$.  A set $S$ is called \emph{rigidly
 convex} if there exists $e\in S$ and an RZ$_e$-polynomial $p$ such
that $S$ is the closure of the connected component of
$\{x\in\R^n\:|\: p(x)>0\}$ containing $e$. Rigid convexity was
introduced and studied by Helton and Vinnikov \cite{MR2292953}.
Rigidly convex sets are convex (see Section 5.3 in
\cite{MR2292953}); they are also basic closed (see Remark
\ref{Rem:RigidConvexBasicClosed} below). Furthermore, any
spectrahedron with non-empty interior is rigidly convex. The
principal reason is that if $A(\ul t)$ is a linear matrix
polynomial with $A_0\succ 0$, then $p(\ul t)=\det(A(\ul t))$ is an
RZ$_0$-polynomial defining $\sS(A)$ (see \cite{MR2292953},
Thm.~2.2). A much harder question is whether every rigidly convex
set is a spectrahedron. This has been shown for $n=2$ and
conjectured in general by Helton and Vinnikov in \cite{MR2292953}.
The question is closely related to the famous Lax-conjecture.

A subset $S$ of $\R^n$ is called \emph{semidefinitely representable}
if it is the image of a spectrahedron $S'$ in $\R^m$ under a
linear map $\R^m\into\R^n$. A linear matrix representation of $S'$
together with the linear map is called a \emph{semidefinite
representation} of $S$. In contrast to spectrahedra, no necessary
conditions other than convexity are known for a semialgebraic set
to be semidefinitely representable.

Various sufficient conditions
have recently been given by Lasserre \cite{MR2505746} as well as
Helton and Nie \cite{MR2533752},
\cite{HeltonNieNecSuffSDP}. Moreover, it has been shown that various operations, like
taking the interior or taking the convex hull of a finite union, preserve semidefinite representability, see \cite{TimRainer} and \cite{Tim}.

\section{Faces of spectrahedra and rigidly convex
sets}\label{seczwei}

In this section, we study the facial structure of spectrahedra and
rigidly convex sets (see also \cite{MR2322886} for a discussion of facial structures
in a more abstract setting). We review the result of Ramana and Goldman
that every spectrahedron has only exposed faces. We then discuss
how the same result can be proven for rigidly convex sets, mostly
by going back to Renegar's corresponding result for hyperbolicity
cones.

\begin{Defs}
Let $S$ be a closed convex subset of $\R^n$ with non-empty
interior. A \emph{supporting hyperplane} of $S$ is an affine
hyperplane $H$ in $\R^n$ such that $S\cap H\neq\emptyset$ and
$S\setminus H$ is connected (equivalently, the zero set of a
linear polynomial $0\neq \ell\in\R[\ul t]$ such that $\ell\ge 0$
on $S$ and $\{\ell=0\}\cap S\neq\emptyset$).

A \emph{face} of $S$ is a non-empty convex subset $F\subseteq S$ with
the following property: For every $x,y\in S$, $\lambda\in (0,1)$, if
$\lambda x+(1-\lambda) y\in F$, then $x,y\in F$.

A face $F$ of $S$ is called \emph{exposed} if either $F=S$ or there exists a
supporting hyperplane $H$ of $S$ such that $H\cap S=F$. The hyperplane
$H$ is said to \emph{expose} $F$.

The \emph{dimension} of a face $F$ is the dimension of its affine
hull.
\end{Defs}

\begin{Remarks}\item \label{Remark:Faces}
\begin{enumerate}
\item $H\cap S$ is an exposed face of $S$ for any supporting
hyperplane $H$ of $S$.

\item For every face $F\subsetneq S$ there exists a supporting
hyperplane $H$ of $S$ such that $F\subseteq H$.

\item Every face of $S$ is closed (since $S$ is closed).

\item If $F_1,F_2$ are faces of $S$ with $F_1\subsetneq F_2$, then
$\dim(F_1)<\dim(F_2)$.

\item Let $F$ be a face of $S$, and take $x_0$ in the relative
interior of $F$. For any two points $x\neq y\in\R^n$, let $g(x,y)$
denote the line passing through $x$ and $y$. Then $F$ consists
exactly of $x_0$ and  those points $x\in S\setminus\{x_0\}$ such
that $x_0$ lies in the relative interior of $g(x,x_0)\cap S$.
\end{enumerate}
\end{Remarks}

The following is a combination of Theorem~1 and Corollary~1 in
\cite{MR1342934} (see Corollary~1 in \cite{MR2322886} for a more general statement).
\begin{Thm}[Ramana and Goldman]
Let $A(\ul t)$ be a linear matrix polynomial of dimension $k$, $S=\sS(A)$.
For every linear subspace $U$ of $\R^k$, the set
\[
F_U=\bigl\{x\in S\:|\: U\subseteq \ker\bigl(A(x)\bigr)\bigr\}
\]
is a face of $S$ or empty, and every face of $S$ is of this form.
Furthermore, every face of $S$ is exposed.
\end{Thm}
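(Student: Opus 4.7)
The plan is to establish the three parts of the statement---$F_U$ is a face of $S$ or empty, every face of $S$ has this form, and every such face is exposed---separately, relying throughout on the elementary fact that a matrix $M \succeq 0$ satisfies $u^{T} M u = 0$ if and only if $Mu = 0$.

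First I would check that $F_U$ is a face whenever non-empty. Given $x, y \in S$ and $\lambda \in (0, 1)$ with $\lambda x + (1-\lambda) y \in F_U$, for any $u \in U$ one has
\[
0 = u^{T} A\bigl(\lambda x + (1-\lambda) y\bigr) u = \lambda \, u^{T} A(x) u + (1-\lambda)\, u^{T} A(y) u,
\]
which forces both non-negative summands to vanish, so $A(x)u = A(y)u = 0$ and hence $x, y \in F_U$.

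For the second claim, let $F$ be a face, take $x_0$ in its relative interior, and set $U := \ker A(x_0)$. The inclusion $F \subseteq F_U$ follows from the argument of the previous step: for any $x \in F$, relative interiority yields $y \in F$ and $\mu \in (0, 1)$ with $x_0 = \mu x + (1 - \mu) y$, and the same computation shows $U \subseteq \ker A(x)$. For the reverse inclusion, I would argue that $x_0$ actually lies in the relative interior of $F_U$, whence the face $F \subseteq F_U$ meeting this relative interior cannot be proper. To see this, I would use lower semi-continuity of the rank of $A(x)$: there is a neighborhood $V$ of $x_0$ in which $\ker A(x) \subseteq U$ for every $x$. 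On $V$ the defining condition $U \subseteq \ker A(x)$ becomes a purely linear system, and positive semidefiniteness of $A(x)$ on $U^\perp$ holds automatically by continuity from $A(x_0)|_{U^\perp} \succ 0$; so $F_U \cap V$ is a relatively open piece of an affine subspace and contains $x_0$ in its relative interior.

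Finally, for exposedness I would take $Y$ to be the orthogonal projection onto $U$ and set $\ell(\ul t) := \Tr\bigl(Y\, A(\ul t)\bigr)$. Since the trace of a product of positive semi-definite matrices is non-negative, $\ell \ge 0$ on $S$, and $\Tr\bigl(Y A(x)\bigr) = 0$ is equivalent to $YA(x) = 0$, i.e.\ to $U \subseteq \ker A(x)$. Thus $\{\ell = 0\} \cap S = F_U$; provided $F_U \neq S$, there exists $x \in S$ and $u \in U$ with $A(x) u \neq 0$, whence $\ell(x) > 0$, so $\ell$ is a non-zero linear polynomial and $\{\ell = 0\}$ exposes $F_U$. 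I expect the main obstacle to be the equality $F = F_U$ in the middle step, whose delicate half depends on the rank semi-continuity argument to place $x_0$ in the relative interior of $F_U$.
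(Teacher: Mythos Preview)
The paper does not actually prove this theorem; it only cites Ramana and Goldman \cite{MR1342934} (their Theorem~1 and Corollary~1) and refers to \cite{MR2322886} for a generalization. Your three-part argument is the standard one and is essentially correct; in particular the exposing functional $\ell(\ul t)=\Tr\bigl(YA(\ul t)\bigr)$ is exactly the device used in those references.

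There is, however, one incorrect intermediate claim in your second step. You assert that by lower semi-continuity of the rank there is a neighborhood $V$ of $x_0$ in which $\ker A(x)\subseteq U$ for every $x$. This is false: take
\[
A(t_1,t_2)=\begin{pmatrix}t_1&t_2\\t_2&1\end{pmatrix},\qquad x_0=(0,0),\qquad U=\ker A(x_0)=\R e_1.
\]
For every $t_2\neq 0$ the point $(t_2^2,t_2)$ lies in $S$ and $\ker A(t_2^2,t_2)=\R(1,-t_2)\not\subseteq U$, so no such neighborhood exists. Rank semi-continuity only yields $\dim\ker A(x)\le\dim U$ nearby, not the inclusion of subspaces. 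Fortunately you never use this claim: your real argument is that on the affine subspace $W=\{x:A(x)U=0\}$ the matrix $A(x)$ has block form with zero $U$-block, so that $A(x)\succeq 0$ is equivalent to $A(x)|_{U^\perp}\succeq 0$; since $A(x_0)|_{U^\perp}\succ 0$, this holds throughout a neighborhood $V$, giving $F_U\cap V=W\cap V$ and hence $x_0\in\text{relint}(F_U)$. Simply delete the kernel-inclusion sentence and the proof stands as written.
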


A similar result can be proven for rigidly convex sets, by
reducing to the results of Renegar on hyperbolicity cones that we
now describe: A homogeneous polynomial $P$ in $n+1$ variables is
called \emph{hyperbolic with respect to
$e\in\R^{n+1}\setminus\{0\}$} if $P(e)>0$ and all zeros of the
univariate polynomial $P(x-se)\in\R[s]$ are real, for every
$x\in\R^{n+1}$. The \emph{hyperbolicity cone of $P$} is the
connected component of $\{P>0\}$ containing $e$. It is a convex
cone in $\R^{n+1}$. Its closure is called the \emph{closed
hyperbolicity cone of $P$}.

\begin{Thm}[Renegar \cite{MR2198215}, Thm.~23]\label{Thm:RenegarExposedFaces}
The faces of a closed hyperbolicity cone are exposed.
\end{Thm}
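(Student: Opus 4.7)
The result is due to Renegar \cite{MR2198215}, and we outline the inductive argument one uses. Let $P$ be homogeneous of degree $d$, hyperbolic with respect to $e$, and write $\Lambda$ for its closed hyperbolicity cone. For $x \in \Lambda$ let $\mu(x)$ denote the multiplicity of $0$ as a root of the univariate polynomial $s \mapsto P(x - se)$; from the Taylor expansion
$$P(x - se) = \sum_{j=0}^{d} \frac{(-s)^j}{j!}\, P^{(j)}(x), \qquad P^{(j)} := \partial_e^j P,$$
$\mu(x)$ is the least $j$ with $P^{(j)}(x) \ne 0$. By a classical theorem of G{\aa}rding, each derivative $P^{(j)}$ is itself hyperbolic with respect to $e$, its closed hyperbolicity cone $\Lambda^{(j)}$ contains $\Lambda$, and hence $P^{(j)} \ge 0$ on $\Lambda$, with $P^{(j)}(x) > 0$ whenever $j \ge \mu(x)$.

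The plan is to induct on $d$. The case $d = 1$ is immediate since $\Lambda$ is then a closed half-space. For the inductive step, let $F \subsetneq \Lambda$ be a proper face, fix $x_0$ in its relative interior, and set $k := \mu(x_0) \ge 1$. Then $Q := P^{(k-1)}$ is hyperbolic of degree $d - k + 1 < d$, vanishes at $x_0$, and is nonnegative on $\Lambda \subseteq \Lambda^{(k-1)}$. By the inductive hypothesis, the smallest face $G$ of $\Lambda^{(k-1)}$ containing $x_0$ in its relative interior is exposed, say by a supporting hyperplane $H = \{\ell = 0\}$ with $\ell \ge 0$ on $\Lambda^{(k-1)}$. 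Then $F' := H \cap \Lambda = G \cap \Lambda$ is an exposed face of $\Lambda$, and the inclusion $F \subseteq F'$ follows from $x_0 \in \mathrm{relint}(F)$ and $\ell(x_0) = 0$ by the standard argument: any $y \in F$ can be continued through $x_0$ by a small amount within $F$, writing $x_0$ as a convex combination of $y$ and that continuation, both in $\Lambda$, and then $\ell(y) = 0$ is forced by $\ell(x_0) = 0$ and $\ell \ge 0$.

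The main obstacle is the reverse inclusion $F' \subseteq F$. A concrete candidate exposing functional, which also helps in analyzing this step, is $\ell(x) := \nabla Q(x_0) \cdot x$: Euler's identity applied to the homogeneous polynomial $Q$ yields $\ell(x_0) = (d - k + 1) Q(x_0) = 0$, and since $Q$ attains its minimum on $\Lambda$ at $x_0$, one has $\ell \ge 0$ on $\Lambda$. For $y \in \Lambda$ with $\ell(y) = 0$, the univariate polynomial $R(t) := Q(x_0 + t(y - x_0))$ satisfies $R(0) = R'(0) = 0$ and $R(t) \ge 0$ on $[0, 1]$, so $0$ is a root of $R$ of multiplicity at least two. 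Since $Q$ is, up to a positive scalar, the $(d - k + 1)$-th elementary symmetric polynomial of the real eigenvalues $\lambda_1(t) \le \cdots \le \lambda_d(t)$ of the pencil $s \mapsto P(x_0 + t(y - x_0) - se)$, the $k$ eigenvalues that vanish at $t = 0$ necessarily have vanishing right-derivative. A Puiseux analysis of these real algebraic functions, together with the interlacing properties of the roots of the derivative polynomials $P^{(j)}$, then shows that $x_0 + t(y - x_0)$ remains in $\Lambda$ on an open two-sided neighborhood of $t = 0$, which is precisely the condition $y \in F$. This last step is the most delicate part of the argument.
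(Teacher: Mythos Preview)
Your core construction---the exposing hyperplane $\{\nabla P^{(k-1)}(x_0)\cdot x=0\}$ where $k=\mu(x_0)$---is exactly what the paper (following Renegar) uses. However, the inductive framing you put around it is both unnecessary and flawed: when $k=\mu(x_0)=1$ (the generic case of a smooth boundary point), your $Q=P^{(k-1)}=P$ has degree $d-k+1=d$, not $<d$, so the induction does not progress. You would need to treat $k=1$ separately, and that case already contains the full content of the direct argument. In fact the induction never does any work even when $k\ge 2$: knowing that $G$ is exposed in $\Lambda^{(k-1)}$ does not by itself give $G\cap\Lambda=F$; you still have to run exactly the eigenvalue analysis you sketch afterwards, for the specific functional $\ell=\nabla Q(x_0)\cdot(\,\cdot\,)$ rather than the abstract $H$ coming from the inductive hypothesis.

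The paper's sketch differs from yours in one further respect. It asserts the stronger claim that the tangent hyperplane $H$ to $\{P^{(m-1)}=0\}$ at $x_0$ exposes $F$ already as a face of $C^{(m-1)}$, i.e.\ $H\cap C^{(m-1)}=F$; then $H\cap C=F$ is immediate from $F\subseteq C\subseteq C^{(m-1)}$. Your route aims directly at $H\cap\Lambda=F$ via the eigenvalue picture. That is legitimate, and your derivation of $\sum_{i\le k}\lambda_i'(0^+)=0$ from $R'(0)=0$ is correct, but the final sentence (``Puiseux analysis \dots\ shows that $x_0+t(y-x_0)$ remains in $\Lambda$ on an open two-sided neighborhood'') hides the real difficulty: one must rule out odd-order vanishing of the zero eigenvalue branches, and this is where Renegar's interlacing of the roots of the $P^{(j)}$ enters in earnest. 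Since the theorem is a cited result and the paper itself only sketches the idea, your level of detail is comparable---just drop the induction and present the tangent-hyperplane construction directly.
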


\begin{Cor}
The faces of a rigidly convex set are exposed.
\end{Cor}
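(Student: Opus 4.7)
The plan is to reduce to Theorem \ref{Thm:RenegarExposedFaces} by homogenization. Let $S$ be the closure of the connected component $V$ of $\{p>0\}\subseteq\R^n$ containing $e'$, where $p$ is RZ$_{e'}$ of degree $d$. Set $P(t_0,\ul t):=t_0^d\,p(\ul t/t_0)\in\R[t_0,t_1,\dots,t_n]$ and $e:=(1,e')\in\R^{n+1}$, so that $P$ is a form of degree $d$ with $P(e)=p(e')>0$.

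First I verify that $P$ is hyperbolic with respect to $e$. For $v=(v_0,v')\in\R^{n+1}$, the identity
\[
\frac{v'-se'}{v_0-s}=e'+\frac{1}{v_0-s}\,(v'-v_0e')
\]
gives $P(v-se)=(v_0-s)^d\,p\bigl(e'+\tau(v'-v_0e')\bigr)$ with $\tau=1/(v_0-s)$. The RZ$_{e'}$ condition makes the $\tau$-roots of $p(e'+\tau(v'-v_0e'))$ real, and under $s=v_0-1/\tau$ these correspond bijectively to the $s$-roots of $P(v-se)$ away from $s=v_0$; the $d-\deg_\tau p(e'+\tau(v'-v_0e'))$ missing roots collapse to $s=v_0$, which is still real. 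Let $\Lambda$ be the open connected component of $\{P>0\}$ containing $e$ and $C:=\ol\Lambda$ the closed hyperbolicity cone. By the convexity of open hyperbolicity cones (G\aa rding), $\Lambda\cap\{t_0=1\}$ is connected, and since it is contained in $\{P(1,\cdot)>0\}=\{p>0\}$ and contains $(1,e')$, it equals $\{1\}\times V$. Any $(1,x)\in C$ is approximated by $\epsilon(1,e')+(1-\epsilon)(1,x)\in\Lambda\cap\{t_0=1\}$ as $\epsilon\to 0^+$, so $C\cap\{t_0=1\}=\{1\}\times S$.

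Now let $F\subseteq S$ be a face (which we may take proper, since the case $F=S$ is exposed by definition), and pick $x_0$ in its relative interior. Let $\wt F$ be the unique face of $C$ whose relative interior contains $(1,x_0)$. Theorem \ref{Thm:RenegarExposedFaces} provides a linear $L\colon\R^{n+1}\to\R$ with $L\ge 0$ on $C$ and $\{L=0\}\cap C=\wt F$. Then $\ell(x):=L(1,x)$ is affine linear and nonnegative on $S$, and the face $F':=\{x\in S:\ell(x)=0\}$ of $S$ contains $x_0$, hence $F'\supseteq F$. Conversely, for $x\in F'$ we have $(1,x)\in\wt F$; Remarks \ref{Remark:Faces}(5) inside $C$ puts $(1,x_0)$ in the relative interior of $g((1,x),(1,x_0))\cap C$, which equals $\{1\}\times(g(x,x_0)\cap S)$ since $g((1,x),(1,x_0))\subseteq\{t_0=1\}$. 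Thus $x_0$ is in the relative interior of $g(x,x_0)\cap S$, and Remarks \ref{Remark:Faces}(5) inside $S$ yields $x\in F$.

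The main obstacle is the hyperbolicity verification: one must track the potential drop in $\tau$-degree of $p(e'+\tau(v'-v_0e'))$ and identify the compensating roots at the ``point at infinity'' $s=v_0$. The slicing argument and the face transfer are conceptually clean once the convexity of hyperbolicity cones and the line-segment characterization (Remarks \ref{Remark:Faces}(5)) are in hand.
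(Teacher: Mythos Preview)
Your proof is correct and follows essentially the same approach as the paper: homogenize to pass from the rigidly convex set to a closed hyperbolicity cone, invoke Renegar's theorem there, and slice back with the hyperplane $\{t_0=1\}$, using the line-segment characterization of faces (Remarks~\ref{Remark:Faces}(5)) to match up the face of $S$ with the corresponding face of $C$. You supply more detail than the paper does---in particular an explicit verification of hyperbolicity and of $C\cap\{t_0=1\}=\{1\}\times S$, which the paper dismisses as ``well-known and easy to see''---and you phrase the face transfer slightly differently (taking $\wt F$ as the face of $C$ whose relative interior contains $(1,x_0)$ rather than building it directly from the line-segment description), but these are the same object and the same argument.
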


\begin{proof}
It is well-known and easy to see that a polynomial $p\in\R[\ul t]$ is
an $RZ_e$-polynomial if and only if the homogenisation $P(\ul
t,u)=u^dp(\frac{\ul t}{u})$ is hyperbolic with respect to $\wt
e=(e,1)$. Furthermore, the rigidly convex set $S\subseteq\R^n$ defined
by $p$ (i.e.~the closure of the connected component of $\{p>0\}$
containing $e$) is the intersection of $C$, the closed hyperbolicity
cone of $P$ in $\R^{n+1}$, with the hyperplane $H=\{u=1\}$.

Let $F_0$ be a face of $S$. For any two points $x\neq
y\in\R^{n+1}$, let $g(x,y)$ denote the line passing through $x$
and $y$. Take $x_0$ in the relative interior of $F_0$, and let $F$
be the set of all points $z\in C$ such that $x_0$ lies in the
relative interior of $g(z,x_0)\cap C$. One checks that $F$ is a
face of $C$ and that $F\cap H=F_0$ (see Remark \ref{Remark:Faces}
(5)). Since $F$ is exposed by Thm.~\ref{Thm:RenegarExposedFaces},
so is $F_0$.
\end{proof}

The idea of the proof of Renegar's theorem is the following: Let $P$ be a
homogeneous polynomial in $n+1$ variables $(\ul t,u)$ that is
hyperbolic with respect to $e\in\R^{n+1}\setminus\{0\}$, and let $C$
be the closed hyperbolicity cone of $P$. For every $k\ge 0$, put
\[
P^{(k)}(\ul t,u)=\frac {d^k}{ds^k} P\bigl((\ul t,u)+se\bigr)\biggl|_{s=0}.
\]
The polynomials $P^{(k)}$ are again hyperbolic with respect to $e$ (by Rolle's
theorem) and the corresponding closed hyperbolicity cones $C^{(k)}$ form an
ascending chain $C=C^{(0)}\subseteq C^{(1)}\subseteq C^{(2)}\subseteq\cdots$.
For $x\in C$, define ${\rm mult}(x)$ as the multiplicity of $0$
as a zero of the univariate polynomial $P(x+se)\in\R[s]$. If ${\rm
 mult}(x)=m$, then $x$ is a boundary point of $C^{(m-1)}$ and a regular
point of $\{P^{(m-1)}=0\}$, i.e.~$(\nabla P^{(m-1)})(x)\neq
0$. Now if $F$ is a face of $C$ and $x$ is in the relative interior of
$F$, then the tangent space of $P^{(m-1)}$ in $x$ exposes $F$ as a face
of $C^{(m-1)}$ and hence as a face of $C$.

This translates into the setting of rigid convexity as follows: Let
$p\in\R[\ul t]$ be an $RZ_0$-polynomial of degree $d$, and let $S$ be
the corresponding rigidly convex set; write $p=\sum_{i=0}^d p_i$ with
$p_i$ homogeneous of degree $i$, and put $P(\ul t,u)=u^dp(\frac {\ul
 t}{u})=\sum_{i=0}^d p_{d-i}(\ul t)u^i$. Define $P^{(k)}$ for $k\ge
0$ as above and put $p^{(k)}(\ul t)=P^{(k)}(\ul t,1)$, so that
\[
p^{(k)}(\ul t)=\sum_{i=k}^d \frac{i!}{(i-k)!} p_{d-i}(\ul t).
\]
The polynomials $p^{(k)}$ are again RZ$_0$-polynomials and the
corresponding rigidly convex sets form an ascending chain
$S=S^{(0)}\subseteq S^{(1)}\subseteq S^{(2)}\subseteq\cdots$. For any
$x\in S$, we find that ${\rm mult}(x)$ is the multiplicity of $0$ as a
zero of the univariate polynomial $\sum_{i=0}^d
p_{d-i}(x)(1+s)^i\in\R[s]$. A simple computation shows that ${\rm
 mult}(x)$ is also the multiplicity of $1$ as a zero of the
univariate polynomial $p(sx)\in\R[s]$.

Now let $F$ be a face of $S$, let $x$ be a point in the relative
interior of $F$, and put $m={\rm mult}(x)$. Then $x$ is a boundary point of
$S^{(m-1)}$ and a regular point of $\{p^{(m-1)}=0\}$. The tangent
space $\{x+v\:|\: (\nabla p^{(m-1)}(x))^tv=0\}$ exposes $F$
as a face of $S$.

\begin{Remark}\label{Rem:RigidConvexBasicClosed}
It follows from Renegar's construction that closed hyperbolicity
cones and rigidly convex sets are basic closed semialgebraic sets.
Namely, if $C$ is the closed hyperbolicity cone of a hyperbolic
polynomial $P$ of degree $d$, then
$C=\sS(P,P^{(1)},\dots,P^{(d-1)})$; similarly, if $S$ is a rigidly
convex set corresponding to an RZ$_0$-polynomial $p$ of degree
$d$, then $S=\sS(p,p^{(1)},\dots,p^{(d-1)})$.

Alternatively, one can use the fact that the closed hyperbolicity cone of
$P$ coincides with the set of all $ x\in\R^{n+1}$ such that all
zeros of $P(x-se)\in\R[s]$ are nonnegative. This translates to an
alternating sign condition on the coefficients with respect to
$s$, as explained in Section 1.

\end{Remark}

\begin{Example}
Let $p=t_1^3-t_1^2-t_1-t_2^2+1\in\R[t_1,t_2]$. One checks
that $p$ is an irreducible RZ$_0$-polynomial. The corresponding
rigidly convex set, i.e.~the closure of the connected component of $\{p>0\}$
containing $0$, is the basic closed set $S=\sS(p,1-t_1)$.

We have ${\rm mult}(x)=1$ for every boundary point $x\in\partial
S\setminus\{(1,0)\}$, and ${\rm mult}(1,0)=2$. Furthermore,
$p^{(1)}=-t_1^2-t_2^2-2t_1+3$, $p^{(2)}=6-t_1$. Every
$x\in\partial S\setminus\{(1,0)\}$ is a regular point of $\{p=0\}$
and is exposed as a face of $S$ by the tangent line to $\{p=0\}$
in $x$. The point $(1,0)$ is a regular point of $\{p^{(1)}=0\}$
and is exposed as a face of $S$ by the tangent line to that curve
in $(1,0)$, which is $t_1=1$. We also see that
$S=\sS(p,p^{(1)},p^{(2)})$ (though $p^{(2)}$ is redundant):
\begin{center}
\begin{tikzpicture}
\begin{scope}
\clip (-4,-2.2) rectangle (7,2.2);
\pgfsetstrokecolor{FireBrick};
\pgfsetfillpattern{north east lines}{FireBrick};
\filldraw[smooth,domain=0:2,samples=\nos] plot({\x-1},{0-(sqrt((\x-1)^3-(\x-1)^2-(\x-1)+1))});
\draw[smooth,domain=2:4,thick] plot({\x-1},{0-(sqrt((\x-1)^3-(\x-1)^2-(\x-1)+1))});
\filldraw[smooth,domain=0:2,samples=\nos] plot({\x-1},{sqrt((\x-1)^3-(\x-1)^2-(\x-1)+1)});
\draw[smooth,domain=2:4,thick] plot({\x-1},{sqrt((\x-1)^3-(\x-1)^2-(\x-1)+1)});
\pgfsetstrokecolor{DarkBlue};
\draw[thick] (-1,0) circle (2);
\pgfsetstrokecolor{DarkOliveGreen};
\draw[thick] (6,-4) -- (6,4);
\end{scope}
\draw[->] (-4,0) -- (7,0) node[right]{$t_1$};
\draw[->] (0,-2.5) -- (0,2.5) node[above]{$t_2$};
\fill[color=DarkGreen] (1,0) circle (2pt);
\end{tikzpicture}
\end{center}

By the theorem of Helton and Vinnikov, $S$ is a spectrahedron.
Explicitly, let $A(t_1,t_2)=A_0+t_1A_1+t_2A_2$ with
\[
A_0=
\left(
\begin{array}{ccc}
2 & 0 & 1\\
0 & 1 & 0\\
1 & 0 & 1
\end{array}
\right),
\quad
A_1=
\left(
\begin{array}{ccc}
-2 & 0 & -1\\
0 & -1 & 0\\
-1 & 0 & 0
\end{array}
\right),
\quad
A_2=
\left(
\begin{array}{ccc}
0 & 1 & 0\\
1 & 0 & 0\\
0 & 0 & 0
\end{array}
\right).
\]
For the characteristic polynomial, one finds
$\chi_A(s)=c_0+c_1s+c_2s^2-s^3$ with $c_0=p$,
$c_1=-t_1^2+5t_1+t_2^2-4$, $c_2=4-3t_1$. One checks that
$S=\sS(A)=\sS(c_0,-c_1,c_2)=\sS(c_0,-c_1)$. This gives an alternative
description of $S$ as a basic closed set.
\begin{center}
\begin{tikzpicture}
\begin{scope}
\clip (-4,-2.2) rectangle (7,2.2);
\pgfsetstrokecolor{FireBrick};
\pgfsetfillpattern{north east lines}{FireBrick};
\filldraw[smooth,domain=0:2,samples=\nos] plot({\x-1},{0-(sqrt((\x-1)^3-(\x-1)^2-(\x-1)+1))});
\draw[smooth,domain=2:4,thick] plot({\x-1},{0-(sqrt((\x-1)^3-(\x-1)^2-(\x-1)+1))});
\filldraw[smooth,domain=0:2,samples=\nos] plot({\x-1},{sqrt((\x-1)^3-(\x-1)^2-(\x-1)+1)});
\draw[smooth,domain=2:4,thick] plot({\x-1},{sqrt((\x-1)^3-(\x-1)^2-(\x-1)+1)});
\pgfsetstrokecolor{DarkBlue};
\draw[smooth,domain=-5:1,samples=\nos,thick] plot({\x},{sqrt(\x^2-5*\x+4)});
\draw[smooth,domain=4:8,samples=\nos,thick] plot({\x},{sqrt(\x^2-5*\x+4)});
\draw[smooth,domain=-5:1,samples=\nos,thick] plot({\x},{0-sqrt(\x^2-5*\x+4)});
\draw[smooth,domain=4:8,samples=\nos,thick] plot({\x},{0-sqrt(\x^2-5*\x+4)});
\pgfsetstrokecolor{DarkOliveGreen};
\draw[thick] (4/3,-4) -- (4/3,4);
\end{scope}
\draw[->] (-4,0) -- (7,0) node[right]{$t_1$};
\draw[->] (0,-2.5) -- (0,2.5) node[above]{$t_2$};
\fill[color=DarkGreen] (1,0) circle (2pt);
\end{tikzpicture}
\end{center}
\end{Example}

\section{Exposed faces and Lasserre relaxations}\label{secdrei}

For a certain class of convex semialgebraic sets, Lasserre has
given an explicit semidefinite representation
\cite{MR2505746} (see \cite{MR1940975} for a less well known but related construction), as follows: Let $\ul p=(p_1,\dots,p_m)$
be an $m$-tuple of real polynomials in $n$ variables $\ul t$, and
set $p_0=1$. Let $\QM(\ul p)$ be the quadratic module generated by
$\ul p$, i.e.~
\[
\QM(\ul p)=\left\{\sum_{i=0}^m \sigma_i p_i\:\bigl|\:
\sigma_i\in\sum\R[\ul t]^2\right\}
\]
where $\sum\R[\ul t]^2=\{f_1^2+\cdots +f_r^2\:|r\ge 0,
f_1,\dots,f_r\in\R[\ul t]\}$. We denote by $\R[\ul t]_d$ the
finite-dimensional vector space of polynomials of degree at most $d$,
and write $\R[\ul t]_d^\vee$ for its (algebraic) dual. Define
\[
\QM(\ul p)_d=\left\{\sum_{i=0}^m \sigma_i p_i\:\bigl|\:
\sigma_i\in\sum\R[\ul t]^2;\: \sigma_ip_i\in\R[\ul t]_d\right\}.
\]
Note that the inclusion $\QM(\ul p)_d\subseteq\QM(\ul p)\cap\R[\ul
t]_d$ is in general not an equality. Let
\[
\sL(\ul p)_d=\bigl\{L\in\R[\ul t]_d^\vee\:\bigl|\: L|_{\QM(\ul
 p)_d}\ge 0, L(1)=1\bigr\}.
\]
It is well-known that $\sL(\ul p)_d$ is a spectrahedron in $\R[\ul
t]_d^\vee$ (see for example Marshall \cite{MR2383959}, 10.5.4).
Now consider the projection $\pi\colon\R[\ul t]_d^\vee\into\R^n$,
$L\mapsto (L(t_1),\dots,L(t_n))$ and put
\[
S(\ul p)_d=\pi\bigl(\sL(\ul p)_d\bigr),
\]
a semidefinitely representable subset of $\R^n$. The idea is to
compare $S(\ul p)_d$ with $S=\sS(\ul p)$, the basic closed set
determined by $\ul p$. Note first that $S(\ul p)_d$ contains $S$
and therefore its convex hull: For if $x\in S$, let
$L_{x}\in\R[\ul t]_d^\vee$ denote evaluation in $x$; then
$L_x\in\sL(\ul p)_d$ and $\pi(L_x)=x$. Note also that the sets
$S(\ul p)_d$ form a decreasing sequence, i.e.
$$S(\ul p)_{d+1}\subseteq S(\ul p)_d$$ holds for all $d$.

\medskip We call the set $S(\ul p)_d$ the $d$-th
\emph{Lasserre relaxation} of ${\rm conv}(S)$ with respect to $\ul
p$. If there exists $d\ge 0$ such that $S(\ul p)_d={\rm conv}(S)$,
we say that ${\rm conv}(S)$ possesses an \emph{exact Lasserre
relaxation} with respect to $\ul p$. The existence of an exact
Lasserre relaxation is a sufficient condition for the semidefinite
representability of ${\rm conv}(S)$.

A characterization for exactness of Lasserre relaxations is the
following proposition. The implication (2)$\Rightarrow$(1) is
\cite{MR2505746}, Thm.~2.

\begin{Prop}\label{Lasserre:Exact}
Assume that $S=\sS(\ul p)$ has non-empty interior. For $d\in\N$,
the following are equivalent:
\begin{enumerate}
\item ${\rm conv}(S)\subseteq S(\ul p)_d\subseteq \ol{{\rm conv}(S)}$;
\item Every $\ell\in\R[\ul t]_1$ with $\ell|_S\ge 0$ is contained in
$\QM(\ul p)_d$.
\end{enumerate}
\end{Prop}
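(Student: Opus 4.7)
The plan is to establish the two implications separately; note first that the inclusion $\mathrm{conv}(S) \subseteq S(\ul p)_d$ always holds, since for $x \in S$ the evaluation functional $L_x \colon q \mapsto q(x)$ lies in $\sL(\ul p)_d$ (because $L_x(\sigma_i p_i) = \sigma_i(x) p_i(x) \ge 0$) and satisfies $\pi(L_x) = x$, and $S(\ul p)_d$ is convex.

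For (2) $\Rightarrow$ (1), I would invoke the Hahn--Banach characterisation $\ol{\mathrm{conv}(S)} = \bigcap \{x \in \R^n : \ell(x) \ge 0\}$, the intersection taken over all $\ell \in \R[\ul t]_1$ with $\ell|_S \ge 0$. For any $L \in \sL(\ul p)_d$ and any such $\ell$, hypothesis (2) places $\ell$ in $\QM(\ul p)_d$, so $L(\ell) \ge 0$; expanding $\ell = \ell_0 + \sum_j \ell_j t_j$ and using $L(1) = 1$ gives $L(\ell) = \ell(\pi(L))$, so $\pi(L)$ lies in every such half-space and hence in $\ol{\mathrm{conv}(S)}$.

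For (1) $\Rightarrow$ (2), I would argue dually. Given a linear $\ell$ with $\ell|_S \ge 0$, the identity $L(\ell) = \ell(\pi(L))$ together with (1) gives $L(\ell) \ge 0$ for every $L \in \sL(\ul p)_d$. Since $\sL(\ul p)_d$ is the slice $\{L(1) = 1\}$ of the full dual cone $\QM(\ul p)_d^{*}$, rescaling those $L$ with $L(1) > 0$ and, in the limit case $L(1) = 0$, perturbing by a point evaluation at some $x_0 \in \mathrm{int}(S)$ and letting the perturbation tend to zero, extends the inequality to all of $\QM(\ul p)_d^{*}$. The bipolar theorem in the finite-dimensional ambient $\R[\ul t]_d$ then gives $\ell \in \ol{\QM(\ul p)_d}$.

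The main obstacle is upgrading this to $\ell \in \QM(\ul p)_d$ itself, i.e.\ showing that $\QM(\ul p)_d$ is closed. One writes $\QM(\ul p)_d$ as the image, under the linear map $\phi \colon (\sigma_0,\dots,\sigma_m) \mapsto \sum_i \sigma_i p_i$, of the closed pointed convex cone $\prod_{i=0}^m \bigl( \sum \R[\ul t]_{e_i}^2 \bigr)$ with $2 e_i \le d - \deg p_i$. A standard normalisation argument (if a preimage sequence diverged, its normalisation would produce a nonzero element of the kernel-cone intersection) shows that the image of such a closed pointed cone under a linear map is closed whenever the kernel meets the cone only at $0$. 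That kernel triviality holds here: after discarding zero $p_i$'s (which leaves $\QM(\ul p)_d$ unchanged), the open set $U = \mathrm{int}(S) \cap \{p_1 > 0\} \cap \cdots \cap \{p_m > 0\}$ is non-empty (the zero set of each nonzero $p_i$ has empty interior), and on $U$ a relation $\sum_i \sigma_i p_i = 0$ with SOS $\sigma_i$ forces each $\sigma_i$ to vanish pointwise and hence identically. Consequently $\ol{\QM(\ul p)_d} = \QM(\ul p)_d$, closing the argument.
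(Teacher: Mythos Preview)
Your proof is correct and follows essentially the same strategy as the paper's: both directions rest on separation together with the fact that $\QM(\ul p)_d$ is closed when $S$ has non-empty interior. The paper simply cites this closedness (Marshall, Lemma~4.1.4, or Powers--Scheiderer, Prop.~2.6) and the normalisation $L(1)=1$ (Marshall's ``little trick''), whereas you spell both out explicitly via the kernel--cone argument and the perturbation by $L_{x_0}$; the logical content is the same.
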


\begin{proof}
We include the proof of (2)$\Rightarrow$(1) for the sake of
completeness. So assume that (2) holds and suppose that there
exists $x\in S(\ul p)_d\setminus \ol{{\rm
   conv}(S)}$. Thus there is $\ell\in\R[\ul t]_1$ with $\ell|_S\ge 0$ and
$\ell(x)<0$. Furthermore, there exists a linear functional $L\colon\R[\ul
t]_d\into\R$ such that $L|_{\QM(\ul p)_d}\ge 0$, $L(1)=1$, and
$x=\bigl(L(t_1),\dots,L(t_n)\bigr)$. By assumption, $\ell$ belongs to
$\QM(\ul p)_d$, so $0\le L(\ell)=\ell(L(t_1),\dots,L(t_n))=\ell(x)<0$, a
contradiction.

For the converse, assume that (1) holds, and suppose that there
exists $\ell\in\R[\ul t]_1$ with $\ell|_S\ge 0$ but
$\ell\notin\QM(\ul p)_d$. Since $S$ has non-empty interior,
$\QM(\ul p)_d$ is a closed convex cone in $\R[\ul t]_d$ (see for
example Marshall \cite{MR2383959}, Lemma 4.1.4, or Powers and
Scheiderer \cite{MR1823953}, Proposition 2.6). Thus there exists a linear
functional $L\colon\R[\ul t]_d\into\R$ such that $L|_{\QM(\ul
p)_d}\ge 0$, $L(1)=1$, and $L(\ell)<0$ (note that $L(1)=1$ is
non-restrictive; see the little trick in Marshall \cite{MR2011395}, proof
of Theorem 3.1). Since $x=(L(t_1),\dots,L(t_n))\in S(\ul
p)_d\subseteq\ol{{\rm conv}(S)}$, we have $0\le
\ell(x)=L(\ell)<0$, a contradiction.
\end{proof}

An immediate consequence is that if ${\rm conv}(S)$ is closed (for
example if $S$ is compact or convex), then (2) implies that ${\rm
 conv}(S)$ is semidefinitely representable. Lasserre shows that (2) is satisfied
for certain classes of sets, for example if all $p_i$ are linear
or concave and quadratic. These results have been extended
substantially  by Helton and Nie
\cite{MR2533752,HeltonNieNecSuffSDP}.

\medskip In the following, we will give a necessary condition for (2)
in the case that $S$ is convex. Namely, all faces of $S$ must be
exposed. The following lemma and its proof are a special case of
Prop.~II.5.16 in Alfsen \cite{MR0445271}.

\begin{Lemma}
Let $S$ be a closed convex subset of $\R^n$. A face $F$ of $S$ is
exposed if and only if for every $x\in S\setminus F$ there exists a
supporting hyperplane $H$ of $S$ with $F\subseteq H$ and $x\notin H$.
\end{Lemma}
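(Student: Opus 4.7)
The forward direction is immediate: if $F$ is exposed by a supporting hyperplane $H_0$ with $H_0\cap S=F$, then the same $H_0$ works for every $x\in S\setminus F$, and the case $F=S$ is vacuous. The work lies in the converse, where we must amalgamate the family of supporting hyperplanes produced by the hypothesis into a single one that actually exposes $F$. My plan is to do this via a relative-interior argument inside a suitable cone of linear functionals.

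Translate so that $0\in F$ and let $V=\mathrm{aff}(F)$, now a linear subspace of $\R^n$. Any supporting hyperplane $H$ of $S$ that contains $F$ automatically contains $\mathrm{aff}(F)=V$, and so has the form $H=\{\ell=0\}$ for some nonzero $\ell\in V^\perp$ with $\ell|_S\ge 0$. Set
\[
K=\bigl\{\ell\in V^\perp\:\big|\:\ell|_S\ge 0\bigr\},
\]
a closed convex cone in the finite-dimensional space $V^\perp$. In this language the hypothesis reads: for every $x\in S\setminus F$ there exists $\ell\in K$ with $\ell(x)>0$; in particular $K\ne\{0\}$ whenever $S\ne F$.

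The main step is now to pick $\ell^*$ in the relative interior of $K$ and verify that $H^*=\{\ell^*=0\}$ exposes $F$. The inclusion $F\subseteq H^*\cap S$ is immediate from $\ell^*\in V^\perp$. For the reverse, suppose $x\in S$ satisfies $\ell^*(x)=0$. Given any $\ell\in K$, the relative-interior property yields some $\epsilon>0$ with $(1+\epsilon)\ell^*-\epsilon\ell\in K$; evaluating at $x$ gives $-\epsilon\ell(x)\ge 0$, and combined with $\ell(x)\ge 0$ this forces $\ell(x)=0$. Since this holds for every $\ell\in K$, the contrapositive of the hypothesis places $x$ in $F$, so $H^*\cap S=F$. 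The trivial case $S=F$ is handled by definition, and otherwise the hypothesis guarantees $K\ne\{0\}$, so $\mathrm{ri}(K)$ contains nonzero vectors.

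The main obstacle I anticipate is not the idea itself, but the bookkeeping around the relative-interior step: one must justify that from a point in $\mathrm{ri}(K)$ one can genuinely ``push past'' any other point of $K$ while remaining in the cone, and one must argue once and for all that $\ell^*$ can be chosen nonzero. This is standard for closed convex cones in finite dimensions, but it is the technical heart of the argument; the rest is translation and definitional checks.
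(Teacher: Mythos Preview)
Your proof is correct and takes a genuinely different route from the paper's.

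The paper follows Alfsen: write $F=\bigcap_{k\ge1}U_k$ with each $\R^n\setminus U_k$ compact; for each $k$, use compactness of $S\setminus U_k$ to produce a finite sum $\ell_k$ of supporting functionals that vanishes on $F$ and is strictly positive on $S\setminus U_k$; then set $\ell=\sum_k 2^{-k}\ell_k/\|\ell_k\|$ and check that $\{\ell=0\}$ exposes $F$. Your argument instead passes to the cone $K=\{\ell\in V^\perp:\ell|_S\ge0\}$ and takes any nonzero $\ell^*\in\mathrm{ri}(K)$; the relative-interior ``push-past'' property forces every $\ell\in K$ to vanish wherever $\ell^*$ does, which together with the hypothesis pins down $\{\ell^*=0\}\cap S=F$.

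Both arguments ultimately show that a \emph{generic} positive combination of the available supporting functionals works. The paper builds such a combination by hand via a Lindel\"of/compactness scheme and an explicit series; your approach recognises that ``generic'' is exactly what relative interior means and lets finite-dimensional convexity do the bookkeeping. Your version is shorter and more conceptual, and it makes clear that the exposing hyperplane is not special but typical. The paper's version, on the other hand, is closer to the cited source and does not require invoking properties of relative interiors, which may be pedagogically preferable in some contexts. The one spot in your write-up worth tightening is the throwaway line that $\mathrm{ri}(K)$ contains nonzero vectors once $K\neq\{0\}$: this is true (since $\mathrm{ri}(K)$ is relatively open in $\mathrm{aff}(K)$, which has positive dimension), but as you yourself flag, it is the only place where a reader might pause.
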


\begin{proof}
Necessity is obvious. To prove sufficiency, write $F=\bigcap_{k\ge 1}
U_k$ with $U_k$ open subsets of $\R^n$ such that $\R^n\setminus U_k$
is compact for every $k\ge 1$ (note that $F$ is closed by Remark
\ref{Remark:Faces} (3)). Fix $k\ge 1$. For each $x\in S\setminus
U_k$, we can choose by hypothesis a linear polynomial $\ell_x\in\R[\ul
t]$ such that $\{\ell_x=0\}$ is a supporting hyperplane of $S$ with
$\ell_x|_F=0$ und $\ell_x(x)>0$. Since $S\setminus U_k$ is compact, we may
choose $x_1,\dots,x_m\in S\setminus U_k$ such that $\ell_k:=\sum_{i=1}^m
\ell_{x_i}$ is strictly positive on $S\setminus U_k$. Clearly,
$\ell_k|_F=0$. Put
\[
\ell:=\sum_{k=1}^\infty \frac{\ell_k}{2^k\cdot ||\ell_k||},
\]
where $||\cdot||$ is a norm on the space of linear polynomials. Then
$\{\ell=0\}$ is a supporting hyperplane of $S$ that exposes $F$.
\end{proof}

\begin{Lemma}\label{Lemma:ExposedDimReduction}
Let $S$ be a closed convex subset of $\R^n$ with non-empty interior. A
face $F$ of $S$ is exposed if and only if $F\cap U$ is an exposed face
of $S\cap U$ for every affine-linear subspace $U$ of $\R^n$ containing
$F$ with $\dim(U)=\dim(F)+2$ and $U\cap\interior(S)\neq\emptyset$.
\end{Lemma}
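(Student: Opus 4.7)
The plan is as follows. The forward implication is immediate: if a hyperplane $H$ exposes $F$ in $S$, then for any admissible $U$ we have $U\not\subseteq H$ (since $H$ misses $\interior(S)$), so $H\cap U$ is a hyperplane in $U$ which contains $F$ and satisfies $(S\cap U)\cap(H\cap U)=S\cap H\cap U=F$; hence $H\cap U$ exposes $F$ in $S\cap U$. The substantial work lies in the converse.

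For the converse, the preceding lemma reduces the task to producing, given $x\in S\setminus F$, a supporting hyperplane of $S$ containing $F$ but not $x$. The face property together with convexity implies $\operatorname{aff}(F)\cap S=F$ (extend a segment through a relative interior point of $F$), so $x\notin\operatorname{aff}(F)$ and $\dim\operatorname{aff}(F\cup\{x\})=\dim F+1$. The boundary cases $\dim F\in\{n,n-1,n-2\}$ are immediate: for $\dim F=n-1$ the hyperplane $\operatorname{aff}(F)$ itself supports $S$ and exposes $F$; for $\dim F=n-2$ the only admissible subspace is $U=\R^n$, so the hypothesis is the conclusion; and $\dim F=n$ means $F=S$. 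Henceforth assume $\dim F\leq n-3$. Choose $y\in\interior(S)\setminus\operatorname{aff}(F\cup\{x\})$ (possible since the latter has dimension at most $n-2$) and set $U=\operatorname{aff}(F\cup\{x,y\})$; then $\dim U=\dim F+2$, $x\in U$, and $y\in U\cap\interior(S)$. By hypothesis, there is an affine functional $\ell_U$ on $U$ with $\ell_U|_F=0$, $\ell_U\geq 0$ on $S\cap U$, and $\ell_U(x)>0$.

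The crux of the proof is to extend $\ell_U$ to an affine functional $\ell$ on $\R^n$ that still vanishes on $F$ and is nonnegative on $S$; its zero set is then the required supporting hyperplane. I will proceed by a Hahn--Banach-style induction, adding one dimension at a time. Having extended $\ell$ to an affine subspace $W\supseteq U$ with $W\cap\interior(S)\neq\emptyset$, the extension to $W+\R v$ for some $v\notin W$ amounts to choosing $\ell(v)=c$ so that $\ell(w)+tc\geq 0$ whenever $w+tv\in S$; this reduces to $c\in[a,b]$ with
\[
a=\sup\{-\ell(w)/t:w+tv\in S,\,t>0\},\qquad b=\inf\{-\ell(w)/t:w+tv\in S,\,t<0\}.
\]
Convexity of $S$ forces $a\leq b$: for witnesses $w_i+t_iv\in S$ with $t_1>0>t_2$, the convex combination $(-t_2w_1+t_1w_2)/(t_1-t_2)$ lies in $S\cap W$ and has $\ell$-value $\geq 0$, which rearranges to the desired inequality. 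Finiteness of $a,b$ follows from the condition $W\cap\interior(S)\neq\emptyset$, which yields witnesses on both sides by small perturbations in the $\pm v$ directions.

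The main obstacle is this last extension step; the interior-intersection hypothesis on $U$ (which propagates to every $W\supseteq U$) is precisely what guarantees the Hahn--Banach bounds remain finite, thereby making the inductive extension possible.
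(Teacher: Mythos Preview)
Your proof is correct, and the overall architecture matches the paper's: both reduce via the preceding lemma to finding, for each $x\in S\setminus F$, a supporting hyperplane of $S$ containing $F$ but not $x$; both construct a suitable $U$ of dimension $\dim F+2$ through $F$ and $x$ meeting $\interior(S)$, and invoke the hypothesis to get an exposing hyperplane $G=\{\ell_U=0\}$ of $F$ inside $U$.

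The genuine difference is in how the passage from $U$ to $\R^n$ is made. The paper does it in one stroke: since $G\cap S=F$, the affine subspace $G$ is disjoint from $\interior(S)$, so a single application of the separation theorem yields a hyperplane $H\supseteq G$ with $H\cap\interior(S)=\emptyset$; because $U$ does meet $\interior(S)$, one gets $H\cap U=G$, and $H$ is the desired supporting hyperplane. You instead extend the affine functional $\ell_U$ one dimension at a time by a Hahn--Banach argument, checking by hand that the convexity of $S$ gives $a\le b$ and that $U\cap\interior(S)\neq\emptyset$ makes both bounds finite. Your route is longer but entirely self-contained and makes transparent exactly where the interior-intersection hypothesis is used; the paper's route is shorter but relies on an off-the-shelf separation result. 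It is worth noting that the very next lemma in the paper performs essentially the same extension (an affine functional nonnegative on $S\cap U$ lifted to one nonnegative on $S$), and proves it there again via separation rather than Hahn--Banach---so your inductive argument could serve as an alternative proof of that lemma as well.
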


\begin{proof}
Note first that the condition is empty if $F$ is of dimension $\ge
n-1$. Indeed, $F$ is always exposed in that case by Remark
\ref{Remark:Faces} (2),(4). Thus we may assume
that $n\ge 2$ and $\dim(F)\le n-2$.

If $H$ exposes $F$ and $U\cap\interior(S)$ is non-empty, then $H\cap
U$ exposes $F$ in $S\cap U$. Conversely, assume that $F\cap U$ is an
exposed face of $S\cap U$ for every $U$ satisfying the hypotheses. We
want to apply the preceding lemma. Let $x\in S\setminus F$, then we
must produce a supporting hyperplane $H$ of $S$ containing $F$ with
$x\notin H$. Choose $U$ to be an affine-linear subspace of $\R^n$ of
dimension $\dim(F)+2$ containing $F$ such that $x\in U$ and
$U\cap\interior(S)\neq\emptyset$. By hypothesis, there exists a
supporting hyperplane $G$ of $S\cap U$ in $U$ that exposes $F$ as a
face of $S\cap U$. In particular, $x\notin G$. Since $G\cap S=F$, it
follows that $G\cap\interior(S)=\emptyset$, hence by separation of
disjoint convex sets (see e.g.~Barvinok \cite{MR1940576},
Thm.~III.1.2), there exists a hyperplane $H$ that satisfies
$G\subseteq H$ and $H\cap \interior(S)=\emptyset$. Since
$U\cap\interior(S)\neq\emptyset$, it follows that $G\subseteq H\cap
U\subsetneq U$, hence $G=H\cap U$. Thus $H$ is a supporting hyperplane
of $S$ containing $F$ with $x\notin H$.
\end{proof}

We need the following technical lemma.

\begin{Lemma}\label{Lemma:ExtSep}
Let $S$ be a convex subset and $U$ be an affine-linear subspace of $\R^n$ intersecting
the interior of $S$.
Suppose that $\ell\colon\R^n\to\R$ is an affine linear function such that
$\ell\ge 0$ on $S\cap U$. Then there exists an affine linear function
$\ell'\colon\R^n\to\R$ such that $\ell'\ge 0$ on $S$ and $\ell'|_U=\ell|_U$.
\end{Lemma}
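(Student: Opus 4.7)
My plan is to use the geometric separation theorem for an affine subspace and a disjoint open convex set. The intuition is that $\ell|_U$ already cuts out (in $U$) a halfspace containing $S\cap U$, and I need to produce a hyperplane in $\R^n$ whose restriction to $U$ matches $\{\ell|_U=0\}$ and which still supports $S$ globally.

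I would first dispose of the degenerate case $\ell|_U\equiv 0$ by taking $\ell'=0$. Otherwise the zero set $H_U:=\{u\in U\:|\:\ell(u)=0\}$ is an affine hyperplane in $U$. The key preliminary observation is that $H_U$ must be disjoint from $\interior(S)$: any common point would possess a small ball inside $S$, and intersecting with $U$ would give a relatively open neighborhood in $U$ contained in $S\cap U$ on which the affine function $\ell$ is nonnegative and attains the value $0$ at an interior point, forcing $\ell|_U$ to vanish identically on $U$ and contradicting the case assumption.

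Next I would invoke the separation theorem for the affine subspace $H_U$ and the disjoint open convex set $\interior(S)$ to obtain a hyperplane $H\subseteq\R^n$ containing $H_U$ with $H\cap\interior(S)=\emptyset$. Writing $H=\{\ell''=0\}$ with $\ell''\ge 0$ on $S$, the restriction $\ell''|_U$ is an affine function on $U$ vanishing on the hyperplane $H_U$, so $\ell''|_U=c\,\ell|_U$ for some $c\in\R$. Picking $u_0\in U\cap\interior(S)$, the disjointness shows $u_0\notin H_U$ and $u_0\notin H$, so both $\ell(u_0)$ and $\ell''(u_0)$ are strictly positive, forcing $c>0$. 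Then $\ell':=\ell''/c$ is the desired extension.

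The only technical step worth flagging is the separation step itself: because we are separating an entire affine subspace from an open convex set rather than just two points, we need the slightly stronger geometric Hahn--Banach form that produces a hyperplane through the given subspace, not merely one disjoint from it. Everything else is bookkeeping and a rescaling.
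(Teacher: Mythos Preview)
Your argument is correct (modulo the trivially patchable case where $\ell|_U$ is a nonzero constant $c>0$, so that $H_U=\emptyset$; there take $\ell'\equiv c$). It takes a different route from the paper. The paper does not separate $H_U$ from $\interior(S)$; instead it separates the open half $N=\{x\in U\mid\ell(x)<0\}$ from the convex hull $S'$ of $\{x\in U\mid\ell(x)\ge 0\}\cup S$, after verifying by hand that $N\cap S'=\emptyset$. This uses only the most basic weak separation of two nonempty disjoint convex sets (precisely the form cited from Barvinok), and then argues, much as you do, that the resulting functional cannot vanish identically on $U$ and restricts to a positive multiple of $\ell|_U$. Your version is geometrically cleaner and avoids constructing the auxiliary set $S'$, at the price of invoking the slightly stronger extension form of Hahn--Banach that you correctly flag; the paper's version trades that for the short disjointness verification.
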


\begin{proof}
Let $N:=\{x\in U\mid\ell(x)<0\}$ and $S'$ be the convex hull of
$\{x\in U\mid\ell(x)\ge 0\}\cup S$. Then $N$ and $S'$ are convex sets
that we now prove to be disjoint.

Assume for a contradiction that
there are $\lambda\in[0,1]$, $x\in U$ and $y\in S$ such that $\ell(x)\ge0$ and
$\lambda x+(1-\lambda)y\in N$. Since neither $x$ nor $y$ lies in $N$, we have
$\lambda\not\in\{0,1\}$. Since $U$ is an affine linear subspace,
$\lambda x+(1-\lambda)y\in U$ now implies $y\in U$ and therefore $\ell(y)\ge 0$, leading
to the contradiction
$0>\ell(\lambda x+(1-\lambda)y)=\lambda\ell(x)+(1-\lambda)\ell(y)\ge 0$.

Without loss of generality $N\neq\emptyset$ (otherwise $\ell|_U=0$ and we can take
$\ell'=0$). Then by separation of non-empty disjoint convex sets
(e.g., Thm.~III.1.2 in Barvinok \cite{MR1940576}), we get an affine linear
$\ell'\colon\R^n\to\R$, not identically zero, such that
$\ell'\ge 0$ on $S'$ and $\ell'\le 0$ on $N$. In particular, $\ell'\ge0$ on $S$ and
$\ell'$ cannot vanish at an interior point of $S$.
Since $U$ intersects by hypothesis the interior of $S$, it is not possible that $\ell'$
vanishes identically on $U$. Moreover, all $x\in U$ with $\ell(x)=0$ lie at the same
time in $S'$ and in the closure of $N$, implying that $\ell'(x)=0$. This shows that
the restrictions of $\ell$ and $\ell'$ on $U$ are the same up to a positive factor
which we may assume to be $1$ after rescaling.
\end{proof}

\noindent We are now ready for the main result:

\begin{Thm}\label{main}
Let $S=\sS(\ul p)$ be a basic closed convex subset of $\R^n$ with
non-empty interior. Suppose that there exists $d\ge 1$ such that
the $d$-th Lasserre relaxation of $S$ with respect to $\ul p$ is
exact, i.e.
$$S(\ul p)_d=S$$ holds. Then all faces of $S$ are exposed.
\end{Thm}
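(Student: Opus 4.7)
The plan is to argue by contradiction: assume $F$ is a non-exposed face of $S$, and derive a contradiction from the exactness of the Lasserre relaxation. Recall from Proposition~\ref{Lasserre:Exact} that exactness means every affine linear $\ell$ non-negative on $S$ belongs to $\QM(\ul p)_d$.

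First, I would invoke Lemma~\ref{Lemma:ExposedDimReduction} to reduce to the case $\dim F=n-2$. This reduction is legitimate because the Lasserre condition passes to the intersection $S\cap U$ whenever $U$ meets $\interior(S)$: given a linear $\ell$ on $U$ with $\ell\geq 0$ on $S\cap U$, Lemma~\ref{Lemma:ExtSep} produces a linear extension $\ell'$ non-negative on $S$; by hypothesis $\ell'=\sum\sigma_i p_i\in\QM(\ul p)_d$, and restricting this decomposition to $U$ (restrictions of SOS polynomials are SOS, and degree bounds are preserved) gives the corresponding representation for $\ell$. I would then take $F^*$ to be the smallest exposed face of $S$ containing $F$. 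Since $F\subsetneq S$ gives $F^*\subsetneq S$ (Remark~\ref{Remark:Faces}~(2)) and non-exposedness gives $F\subsetneq F^*$, the dimension constraints force $\dim F^*=n-1$. Fix $\ell_0\in\R[\ul t]_1$ with $\ell_0\geq 0$ on $S$ and $\{\ell_0=0\}\cap S=F^*$, and write $H_0=\mathrm{aff}(F^*)$.

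Next, $F^*$ is a full-dimensional closed convex subset of the hyperplane $H_0$, and $F$ is a codimension-one face of $F^*$. The standard convex-geometric argument that codimension-one faces of closed convex sets are always exposed (via the hyperplane equal to their affine hull) yields an affine linear $\ell_1$ on $H_0$ with $\ell_1\geq 0$ on $F^*$ and $\{\ell_1=0\}\cap F^*=F$. The plan is then to extend $\ell_1$ to an affine linear $\tilde\ell$ on $\R^n$ with $\tilde\ell\geq 0$ on $S$. Once available, $\tilde\ell+\ell_0$ is non-negative on $S$, equals $\ell_1$ on $F^*$, and is strictly positive on $S\setminus F^*$ (since $\ell_0>0$ there); its zero set on $S$ is therefore exactly $F$, exposing $F$ and yielding the desired contradiction.

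The main obstacle is this extension step. Lemma~\ref{Lemma:ExtSep} cannot be applied directly since $H_0$ lies on the boundary of $S$ and does not meet $\interior(S)$; without the Lasserre hypothesis the extension can genuinely fail, because the ratio $-\ell_1/\ell_0$ can escape to $+\infty$ along tangential sequences approaching $F$, as occurs for the classical examples of convex sets with non-exposed faces. The Lasserre exactness must be brought in precisely here. Since $\ell_0$ generates the vanishing ideal of the hyperplane $H_0$, the SOS decomposition $\ell_0=\sum\sigma_i p_i$ and the fact that each term $\sigma_i p_i$ vanishes on $F^*$ force, for each $i$, either $p_i$ to be divisible by $\ell_0$ or $\sigma_i$ to be divisible by $\ell_0^2$; this leads after division by $\ell_0$ to a Positivstellensatz identity of the form $1=\sum_{i\in I}\sigma_i q_i+\ell_0\sum_{i\notin I}\tau_i p_i$, constraining the local geometry of $S$ along $F^*$. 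Combined with a passage to a real closed field extension of $\R$ (the \emph{basic model theory} mentioned in the introduction) to manipulate ratios and infinitesimals uniformly, this identity should provide the quantitative control on $-\ell_1/\ell_0$ needed to build the extension $\tilde\ell$ and close the argument.
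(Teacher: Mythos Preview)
Your reduction to $\dim F=n-2$ via Lemmas~\ref{Lemma:ExposedDimReduction} and~\ref{Lemma:ExtSep} is correct and matches the paper exactly, and your identification of the unique supporting hyperplane $H_0=\{\ell_0=0\}$ with $F^*=H_0\cap S$ of dimension $n-1$ is essentially the paper's Step~1. The divisibility observation that each summand $\sigma_ip_i$ in a decomposition of $\ell_0$ must be divisible by $\ell_0$, with the SOS forcing $\ell_0^2\mid\sigma_i$ whenever $\ell_0\nmid p_i$, is also right and reappears in the paper's Step~3.

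The genuine gap is in your final paragraph. Applying the Lasserre hypothesis \emph{only} to $\ell_0$ and deriving the identity $1=\sum_{i\in I}\sigma_iq_i+\ell_0\sum_{i\notin I}\tau_ip_i$ gives no information about the direction $\ell_1$ in which the boundary $\partial S\setminus H_0$ curls back toward $F$; there is nothing in that identity to bound $-\ell_1/\ell_0$ on $S$, and your appeal to a real closed extension is left as a hope rather than an argument. The Lasserre hypothesis has to be applied to a \emph{family} of supporting hyperplanes that tilt away from $H_0$ and detect that curved part of the boundary.

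This is precisely what the paper does, and it does \emph{not} attempt to build an exposing $\tilde\ell$. Instead (Step~2) it uses the curve selection lemma to pick a semialgebraic path $\gamma\colon[0,1]\to\partial S$ with $\gamma(0)\in F$ and $\gamma((0,1])\cap H_0=\emptyset$, splits the $p_i$ into those vanishing identically on $\gamma$ (the $f_i$) and those strictly positive on $\gamma((0,1])$ (the $g_j$), and applies the Lasserre hypothesis to the supporting hyperplanes $\ell_\lambda$ at $\gamma(\lambda)$. Because $g_j(\gamma(\lambda))>0$ while $\ell_\lambda(\gamma(\lambda))=0$, the SOS coefficient of $g_j$ in the decomposition of $\ell_\lambda$ must vanish at $\gamma(\lambda)$. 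The model theory enters to take $\lambda$ infinitesimal in a non-archimedean real closed extension and then apply the residue map; uniqueness of $H_0$ forces the residue of $\ell_\epsilon$ to be a positive multiple of $\ell_0$, yielding an identity $\ell_0=\sum\rho_if_i+\sum\sigma_jg_j$ over~$\R$ with the extra condition $\sigma_j(0)=0$. It is \emph{this} extra vanishing, coming from the curve, that makes the final divisibility count work (Step~3): after restricting first to $H_0$ and then to the line through the origin transverse to $H_0$, every term on the right is divisible by $\ell_0^2$, contradicting the linearity of $\ell_0$. Your sketch is missing both the curve and the family $\ell_\lambda$, and without them there is no mechanism to produce the crucial condition $\sigma_j(0)=0$.
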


In view of Proposition \ref{Lasserre:Exact}, we have the following
equivalent formulation of the same theorem:
\begin{Thm*}[Alternative formulation]
Let $S=\sS(\ul p)$ be a basic closed convex subset of $\R^n$ with
non-empty interior. Suppose that there exists $d\ge 1$ such that
every linear polynomial $\ell$ with $\ell\ge 0$ on $S$ is
contained in $\QM(\ul p)_d$. Then all faces of $S$ are exposed.
\end{Thm*}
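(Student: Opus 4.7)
The plan is as follows. I will argue by contradiction: assume $S$ has a non-exposed face $F$, and derive a contradiction with the hypothesis. The first step is to reduce to the case $\dim F = n-2$ using Lemma~\ref{Lemma:ExposedDimReduction}, which provides an affine-linear subspace $U$ of $\R^n$ of dimension $\dim F+2$ containing $F$ with $U\cap\interior(S)\neq\emptyset$, in which $F$ is still non-exposed as a face of $S\cap U$. The hypothesis (in the form of Proposition~\ref{Lasserre:Exact}(2)) transfers to $(S\cap U,\underline p|_U)$: given an affine-linear $\ell'$ on $U$ non-negative on $S\cap U$, Lemma~\ref{Lemma:ExtSep} provides an affine-linear $\ell$ on $\R^n$ non-negative on $S$ with $\ell|_U=\ell'|_U$; applying the hypothesis to $\ell$ and restricting the resulting representation to $U$ yields the desired bounded-degree representation on $U$, since restriction of a sum of squares to an affine-linear subspace remains a sum of squares. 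Henceforth assume $\dim F=n-2$.

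Next, the key geometric observation: the space of affine-linear polynomials vanishing on $F$ has dimension $2$, and within it the cone
\[
\sC:=\{\ell\in\R[\underline t]_1 : \ell|_S\ge 0,\ \ell|_F=0\}
\]
is nonzero by Remark~\ref{Remark:Faces}(2). If $\dim\sC=2$, choose two linearly independent extreme ray generators $\ell_1,\ell_2$; either $\{\ell_i=0\}\cap S=F$ for some $i$ (so $F$ is exposed), or both are facets, in which case a dimension count gives $\{\ell_1=0\}\cap\{\ell_2=0\}=\aff(F)$ and hence $\{\ell_1+\ell_2=0\}\cap S=\aff(F)\cap S=F$, again exposing $F$. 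Either alternative contradicts the assumption, so $\dim\sC=1$, spanned by some linear polynomial $\ell_{F^*}$ whose zero set cuts out the unique facet $F^*=\{\ell_{F^*}=0\}\cap S$ properly containing $F$. The remaining goal is to construct an affine-linear polynomial not proportional to $\ell_{F^*}$ in $\sC$, in order to contradict $\dim\sC=1$; any such polynomial will be an exposing functional of $F$.

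The algebraic input is supplied by the Lasserre hypothesis applied to $\ell_{F^*}$. Writing $\ell_{F^*}=\sigma_0+\sum_i\sigma_i p_i$ with $\sigma_ip_i\in\R[\underline t]_d$ and $\sigma_i\in\sum\R[\underline t]^2$, each summand vanishes on $F^*$. I will split the indices: for those $i$ with $p_i$ not identically zero on $F^*$, $\sigma_i$ vanishes on a Zariski-dense subset of $\aff(F^*)=\{\ell_{F^*}=0\}$, so each of its SOS summands is divisible by $\ell_{F^*}$ and $\sigma_i=\ell_{F^*}^2\tau_i$ for some $\tau_i\in\sum\R[\underline t]^2$; for those $i\in J$ with $p_i\equiv 0$ on $F^*$, $p_i$ vanishes on all of $\aff(F^*)$ and $p_i=\ell_{F^*}q_i$ for a polynomial $q_i$ that is non-negative on $S$. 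Substituting and dividing by $\ell_{F^*}$ yields the polynomial identity
\[
1\;=\;\ell_{F^*}\cdot R\;+\;\sum_{i\in J}\sigma_iq_i\quad\text{in }\R[\underline t],
\]
with the by-product that $\ell_{F^*}R\le 1$ on $S$ since $\sum\sigma_iq_i\ge 0$ there.

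Finally, since $F$ has codimension one in $F^*$, it is a facet of $F^*$ and thus exposed within $F^*$ by an affine-linear polynomial $\mu$ with $\mu|_{F^*}\ge 0$ and $\{\mu=0\}\cap F^*=F$ (extended from $\aff(F^*)$ to $\R^n$ arbitrarily, modulo $\ell_{F^*}$). The plan is to find $c\in\R$ with $\tilde\mu:=\mu+c\,\ell_{F^*}\ge 0$ on $S$; this $\tilde\mu$ lies in $\sC$ but is linearly independent of $\ell_{F^*}$, contradicting $\dim\sC=1$, and incidentally $\{\tilde\mu=0\}\cap S=F$ exhibits $F$ as exposed. Multiplying the identity from the previous step by $\mu$ gives
\[
\mu+c\,\ell_{F^*}\;=\;\ell_{F^*}(c+\mu R)\;+\;\sum_{i\in J}\sigma_i(\mu q_i),
\]
and the main obstacle is to convert this algebraic identity, together with the bound $\ell_{F^*}R\le 1$ on $S$ and the geometric information that $\mu$ exposes $F$ inside $F^*$, into the analytic statement that the ratio $-\mu/\ell_{F^*}$ is bounded above on $S\setminus F^*$. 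The delicate point is controlling this ratio as $x\to F$ from the interior of $S$, where both numerator and denominator vanish to first order along different hyperplanes meeting in $\aff(F)$; here a \L ojasiewicz-type argument, combined with the specific structure of the representation above, is expected to supply the required uniform bound and thereby produce the desired $c$.
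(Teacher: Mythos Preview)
Your reduction to $\dim F=n-2$ and your argument that the supporting hyperplane through $F$ is unique are both correct and match the paper. The divergence, and the genuine gap, is in what follows.

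You apply the Lasserre hypothesis only \emph{once}, to the single linear polynomial $\ell_{F^*}$, and extract the identity $1=\ell_{F^*}R+\sum_{i\in J}\sigma_i q_i$. But this identity can be vacuous. Take the paper's own Example: $S=\sS(t_2-t_1^3,\,t_1+1,\,t_2,\,1-t_2)$, non-exposed face $F=\{(0,0)\}$, facet $F^*=[-1,0]\times\{0\}$, $\ell_{F^*}=t_2$. Here $\ell_{F^*}=p_3$ is itself a generator, so the representation is $\ell_{F^*}=1\cdot p_3$ with all other $\sigma_i=0$. Your procedure then gives $J=\{3\}$, $q_3=1$, $R=0$, and the identity collapses to $1=1$. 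Nothing survives to feed into a \L ojasiewicz argument, and indeed $-\mu/\ell_{F^*}=t_1/t_2$ is unbounded on $S\setminus F^*$ along the boundary curve $t_2=t_1^3$. So the step you label ``the main obstacle'' is not merely incomplete; it fails outright in this case, because the single representation of $\ell_{F^*}$ carries no information about how $\partial S$ curves away from $F^*$ near $F$.

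The paper's proof supplies precisely this missing information by applying the hypothesis to an entire \emph{family} of supporting hyperplanes $\ell_\lambda$ at boundary points $\gamma(\lambda)\in\partial S\setminus F^*$ tending to a point of $F$ (found via the curve selection lemma). Each $\ell_\lambda$ has a degree-$d$ representation in which the weights on the $p_j$ not vanishing along $\gamma$ must themselves vanish at $\gamma(\lambda)$. The uniform degree bound lets one pass to a limit (the paper does this via model completeness of real closed fields and an infinitesimal $\epsilon$; an ultralimit or a compactness argument on coefficients would serve equally) and obtain a representation $t_1=\sum\rho_i f_i+\sum\sigma_j g_j$ with the crucial extra condition $\sigma_j(0)=0$. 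This vanishing condition is exactly what forces the contradiction $t_1^2\mid t_1$ in Step~3, and it is unobtainable from the representation of $\ell_{F^*}$ alone. To repair your argument you would need to bring in this family of supporting hyperplanes and a limiting device that exploits the \emph{uniform} degree bound.
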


\begin{proof}
We begin by showing that it is sufficient to prove that all faces of
dimension $n-2$ are exposed. Let $F$ be a face of $S$ of dimension
$e$. For $e\ge n-1$ there is nothing to show, so assume $e\le n-2$. If
$F$ is not exposed, then by Lemma \ref{Lemma:ExposedDimReduction}
there exists an affine-linear subspace $U$ of $\R^n$ containing $F$
with $\dim(U)=e+2$ and $U\cap\interior(S)\neq\emptyset$ and such that
$F$ is a non-exposed face of $S\cap U$. Furthermore, by Lemma
\ref{Lemma:ExtSep}, for every linear
polynomial $\ell$ that is psd on $S\cap U$ there exists a linear
polynomial $\ell'$ that is psd on $S$ and agrees with $\ell$ on $U$.
Upon replacing $\R^n$ by $U$ and $S$
by $S\cap U$, we reduce to the case $e=n-2$.

Now assume for contradiction that $d\ge 1$ as in the statement exists
and that $F$ is a face of dimension $n-2$ that is not exposed.

\emph{Step 1.} There is exactly one supporting hyperplane $H$ of $S$
that contains $F$. For if $\ell_1,\ell_2$ are non-zero linear polynomials with
$\ell_i|_F=0$ and $\ell_i|_S\ge 0$, put $W:=\{\ell_1=0\}\cap\{\ell_2=0\}$. Then
$\ell:=\ell_1+\ell_2$ defines a supporting hyperplane $\{\ell=0\}$ of $S$ with
$\{\ell=0\}\cap S=W\cap S$. If $\ell_1,\ell_2$ are linearly independent, then
$\dim(W)=n-2=\dim(F)$, hence $F=\{\ell=0\}\cap S$, contradicting the fact
that $F$ is not exposed.

\smallskip We may assume after an affine change of coordinates that
$H=\{t_1=0\}$, $t_1\ge 0$ on $S$, and that $0$ lies in the relative
interior of $F$. Note that any supporting hyperplane of $S$ containing
$0$ must contain $F$ and therefore coincide with $H$.

Since $F$ is not exposed, $F_0=H\cap S$ is a face of dimension $n-1$
with $F$ contained in its relative boundary. In particular, it follows
that $F$ is also contained in the closure of $\partial S\setminus H$.

\medskip

\emph{Step 2.} By the curve selection lemma (see e.g.~Thm.~2.5.5.~in
Bochnak, Coste, and Roy \cite{MR1659509}), we may choose a continuous
semialgebraic path $\gamma\colon [0,1]\into\partial S$ such that
$\gamma(0)=0\in F$, $\gamma\bigl((0,1]\bigr)\cap H=\emptyset$. We
relabel $p_0,\dots,p_m$ into two groups $f_1,\dots,f_r$,
$g_1,\dots,g_s$ as follows:

\begin{center}
\begin{tabular}{ll}
$f_i|_{\gamma([0,1])}=0$ & ($i=1,\dots,r$)\\
$g_j|_{\gamma((0,1])}>0$ & ($j=1,\dots,s$)\\

\end{tabular}
\end{center}
(Indeed, after restricting $\gamma$ to $[0,\alpha]$ for suitable
$\alpha\in (0,1]$ and reparametrizing, we can assume that each $p_i$
falls into one of the above categories.)

We claim that there exists an expression
\[
t_1=\sum_{i=1}^r \rho_i f_i + \sum_{j=1}^s \sigma_j g_j
\leqno{(\ast)}
\]
with $\rho_i,\sigma_j\in\sum\R[\ul t]^2$ and such that
$\sigma_j(0)=0$ for all $j=1,\dots,s$.

To prove the existence of the expression $(\ast)$, consider the
following statement:
\begin{itemize}
\item[($\dagger$)] For each $\lambda\in (0,1]$ there exists a
linear polynomial $\ell_\lambda\in\R[\ul t]_1$ such that
$\ell_\lambda(\gamma(\lambda))=0$, $\ell_\lambda\ge 0$ on $S$, and
$||\ell_{\lambda}||=1$. For this $\ell_\lambda$, there exist
$\rho_i^{(\lambda)},\sigma_j^{(\lambda)}\in\sum\R[\ul t]^2_d$ such
that
\[
\ell_\lambda=\sum_{i=1}^r \rho_i^{(\lambda)} f_i + \sum_{j=1}^s
\sigma_j^{(\lambda)} g_j
\]
and such that
\[
\sigma_j^{(\lambda)}(\gamma(\lambda))=0
\]
for all $j=1,\dots,s$.
\end{itemize}

The statement ($\dagger$) is true, with $d\ge 1$ not depending on
$\lambda$: For $\lambda\in (0,1]$, let $\ell_\lambda\in\R[\ul
t]_1$ be such that $\{\ell_\lambda=0\}$ is a supporting hyperplane
of $S$ passing through $\gamma(\lambda)$, and such that
$||\ell_\lambda||=1$ and $\ell_{\lambda}|_S\ge 0$. By hypothesis,
$\ell_\lambda\in\QM(\{f_i\},\{g_j\})_d$ with $d$ not depending on
$\lambda$, which yields the desired representation. Note that
$\sigma_j^{(\lambda)}(\gamma(\lambda))=0$ is automatic, since
$g_j(\gamma(\lambda))\neq 0$,  but
$\ell_\lambda(\gamma(\lambda))=0$.

Furthermore, because the degree-bound $d$ is
fixed, ($\dagger$) can be expressed as a first-order formula in the
language of ordered rings. Thus ($\dagger$) holds over any real closed
extension field $R$ of $\R$, by the model-completeness of the theory of
real closed fields. Let $R$ be any proper (hence non-archimedean)
extension field and let $\epsilon\in R$, $\epsilon>0$, be an
infinitesimal element with respect to $\R$. We apply $(\dagger)$ with
$\lambda=\epsilon$ and get
\[
\ell_\epsilon=\sum_{i=1}^r \rho_i^{(\epsilon)} f_i + \sum_{j=1}^s
\sigma_j^{(\epsilon)} g_j  \leqno{(\ddagger)}
\]
with
\[
\sigma_j^{(\epsilon)}(\gamma(\epsilon))=0
\]
for all $j=1,\dots,s$. Let $\mathcal{O}$ be the convex hull of
$\R$ in $R$, a valuation ring with maximal ideal $\fm$. Since
$\interior(S)\neq\emptyset$, the quadratic module
$\QM(\{f_i\},\{g_j\})$ has trivial support. As
$||\ell_\epsilon||=1$, it follows that all coefficients of the
polynomials in ($\ddagger$) must lie in $\mathcal{O}$ (see
e.g.~the proof of Lemma 8.2.3 in Prestel and Delzell
\cite{MR1829790}). We can therefore apply the residue map
$\mathcal{O}\into\mathcal{O}/\fm\isom\R$, $a\mapsto \overline{a}$
to the coefficients of ($\ddagger$). From the uniqueness of the
supporting hyperplane $H=\{t_1=0\}$ in $0$ (Step 1), it follows
that $\overline{\ell_\epsilon}=c\cdot t_1$ for some $c\in\R_{>0}$.
This yields the desired expression $(\ast)$.

\medskip

\emph{Step 3.} The existence of ($\ast$) leads to a contradiction:
Substituting $t_1=0$ in ($\ast$) gives
\[
0=\sum_{i=1}^r \rho_i(0,\ul t') f_i(0,\ul t') + \sum_{j=1}^s
\sigma_j(0,\ul t') g_j(0,\ul t')
\]
in $\R[\ul t']$, with $\ul t'=(t_2,\dots,t_n)$. Since all
$f_i(0,\ul t'), g_j(0,\ul t')$ are non-negative on $F_0$, which
has non-empty interior in $H$, it follows that $\rho_i(0,\ul
t')=0$ whenever $f_i(0,\ul t')\neq 0$. In other words, if $t_1$
does not divide $f_i$, then $t_1^2$ divides $\rho_i$ in $\R[\ul
t]$.

Going back to ($\ast$) and substituting $t_2=\cdots=t_n=0$ now gives
\[
t_1=\sum_{i=1}^r \rho_i(t_1,0) f_i(t_1,0) + \sum_{j=1}^s
\sigma_j(t_1,0) g_j(t_1,0)
\]
Since $\sigma_j(0)=0$ for all $j=1,\dots,s$, we now know that
$t_1^2$ divides all terms on the right-hand side, except possibly
$\rho_i(t_1,0)f_i(t_1,0)$ for such $i$ where $t_1|f_i$. In the
latter case, write $f_i=t_1\wt{f}_i$ and note that $\wt{f}_i$
vanishes on $\gamma((0,1])$ since $f_i$ does and $t_1$ does not.
Thus $\wt{f}_i(0)=0$ by continuity which implies
$t_1|\wt{f}_i(t_1,0)$, so $t_1^2|f_i(x_1,0)$ after all. It follows
that $t_1^2$ divides $t_1$, a contradiction.
\end{proof}

\begin{Remarks}
\begin{enumerate}
\item
Note that whether the faces of $S$ are exposed is a purely geometric
condition, independent of the choice of the polynomials $\ul p$. Thus
if $S$ has a non-exposed face, there do not exist polynomials $\ul p$
defining $S$ that yield an exact Lasserre relaxation for $S$.

\item The theorem does \emph{not} imply that a basic closed convex
set with a non-exposed face cannot be semidefinitly representable,
as we will see in the example below. We have only shown that
Lasserre's explicit approach does not work in that case.
\end{enumerate}
\end{Remarks}

\begin{Example}
Consider the basic closed semialgebraic set $S$ defined by
$p_1=t_2-t_1^3$, $p_2=t_1+1$, $p_3=t_2$, $p_4=1-t_2$.
\begin{center}
\begin{tikzpicture}
\begin{scope}
\clip (-1.5,-1.5) rectangle (1.5,1.5);
\pgfsetstrokecolor{FireBrick};
\pgfsetfillpattern{north east lines}{FireBrick};
\draw[smooth,domain=-1.2:1.2] plot({\x},{\x^3});
\draw (-2,1) -- (2,1);
\draw (-1,-1.7) -- (-1,1.7);
\filldraw[smooth,domain=0:1] plot({\x},{\x^3}) -- (-1,1) -- (-1,0) -- (0,0);
\end{scope}
\draw[->] (-2,0) -- (2,0) node[right]{$t_1$};
\draw[->] (0,-1.5) -- (0,1.5) node[above]{$t_2$};
\fill[color=DarkGreen] (0,0) circle (2pt);
\draw[-latex,color=DarkBlue,snake=coil] (1.3,-0.8)
node[right]{non-exposed face} -- (0,0);
\end{tikzpicture}
\end{center}
The point $(0,0)$ is a non-exposed face of $S$ since the only
supporting hyperplane of $S$ passing through $(0,0)$ is the
vertical line $\{t_2=0\}$, whose intersection with $S$ is strictly
bigger than $\{(0,0)\}$. Therefore, there do not exist polynomials
$\ul p$ with $S=\sS(\ul p)$ such that all linear polynomials that
are non-negative on $S$ belong to $\QM(\ul p)_d$ for some
\emph{fixed} value of $d$. On the other hand, the preordering
generated by $p_1,p_2,p_3,p_4$ as above (i.e.~the quadratic module
generated by all products of the $p_i$) contains all polynomials
that are non-negative on $S$. This follows from results of
Scheiderer. Indeed, by the local-global principle \cite[Corollary
2.10]{MR2223624} it suffices to show that the preordering
generated by the $p_i$ is locally saturated. At the origin this
follows from the results in \cite{lpo2} (in particular, Theorem
6.3 and Corollary 6.7). At all other points it follows already
from \cite{MR2223624}, Lemma 3.1.

However, from the result of Helton and Nie, we can deduce that $S$
is in fact semidefinitely representable: For $S$ is the (convex hull
of) the union of the sets $S_1=[-1,0]\times[0,1]$ and
$S_2=\sS(t_2-t_1^3,t_1,1-t_2)$. The set $S_1$ is obviously
semidefinitely representable (even a spectrahedron), while $S_2$
possesses an exact Lasserre-relaxation: More precisely, we claim
that $\QM(t_2-t_1^3,t_1,1-t_2)_3$ contains all linear polynomials
$\ell\in\R[t_1,t_2]$ such that $\ell|_{S_2}\ge 0$. It suffices to show
this for the tangents $\ell_a=t_2-3a^2t_1+2a^3$ to $S_2$ passing
through the points $(a,a^3)$, $a\in [0,1]$ (The claim then follows
from Farkas's lemma). Write
$\ell_a=t_1^3-3a^2t_1+2a^3+(t_2-t_1^3)$. The polynomial
$t_1^3-3a^2t_1+2a^3\in\R[t_1]$ is non-negative on $[0,\infty)$ and
is therefore contained in $\QM(t_1)_3\subseteq\R[t_1]$ (see
Kuhlmann, Marshall, and Schwartz \cite{MR2174483}, Thm.~4.1),
which implies the claim.
\end{Example}

\begin{Remark} \label{clopen} We do not know if the conclusion of Theorem
\ref{main} remains true for ${\rm conv }(S)$ in place of $S$, if
$S$ is not assumed to be convex. It seems unlikely that our proof
can be extended to that case. More generally, is every face of any
Lasserre relaxation  exposed?
\end{Remark}

\noindent
{\bf Note added in proof:} Jo\~ao Gouveia \cite{Joao} showed that our Theorem \ref{main} is optimal in the sense that the questions in Remark \ref{clopen} have negative answers. He also gave an alternative proof of our main theorem which is yet unpublished.

\end{document}